\newtheorem{theo}{Theorem}[section]
\newtheorem{coro}[theo]{Corollary}
\newtheorem{lemm}[theo]{Lemma}
\theoremstyle{remark}
\newtheorem{rema}[theo]{\bf Remark}
\begin{document}

\title{Regular dessins d'enfants with dicyclic group of automorphisms}
\author{Rub\'en A. Hidalgo}

\author{Sa\'ul Quispe}

\subjclass[2010]{30F10, 14H37, 14H57}  
\keywords{Riemann surfaces, Automorphisms, Dessins d'enfants}

\address{Departamento de Matem\'atica y Estad\'{\i}stica, Universidad de La Frontera. Temuco, Chile}
\email{ruben.hidalgo@ufrontera.cl, saul.quispe@ufrontera.cl}
\thanks{Partially supported by Projects Fondecyt 1190001, 11170129 and Anillo ACT1415 PIA-CONICYT}

\begin{abstract}
Let $G_{n}$ be the dicyclic group of order $4n$. We observe that, up to isomorphisms, (i) for $n \geq 2$ even there is exactly one regular dessin d'enfant with automorphism group $G_{n}$, and (ii) for $n \geq 3$ odd there  are exactly two of them. All of them are produced on very well known hyperelliptic Riemann surfaces. We observe, for each of these cases, that the isotypical decomposition, induced by the action of $G_{n}$, of its jacobian variety has only one component. If $n$ is even, then the action is purely-non-free, that is, every element acts with fixed points. In the case $n$ odd, the action is not purely-non-free in one of the actions and purely non-free for the other.
\end{abstract}

\maketitle

\section{Introduction}
Is a consequence of the Riemann-Roch's theorem, in one direction, and the Implicit Function theorem, in the other, that there is an equivalence between the category of closed Riemann surfaces (and holomorphic maps as morphisms) on one side and that of irreducible smooth complex algebraic curves (and rational maps as morphisms) on the other. Belyi's theorem \cite{Belyi} asserts that a closed Riemann surface $S$ can be described by an algebraic curve defined over the field $\overline{\mathbb Q}$ of algebraic numbers if and only if there is a non-constant meromorphic map $\beta:S \to \widehat{\mathbb C}$ whose branch values are contained in the set $\{\infty,0,1\}$. In this case, $\beta$ is called a {\it Belyi map}, $S$ a {\it Belyi curve} and $(S,\beta)$ a {\it Belyi pair}. 
To each Belyi pair $(S,\beta)$ there is associated an embedding of a bipartite graph ${\mathcal G}_{\beta}$ in $S$ (the fibres $\beta^{-1}(0)$ and $\beta^{-1}(1)$ providing, respectively, the white and black vertices, and its edges being $\beta^{-1}([0,1])$) such that $S\setminus {\mathcal G}_{\beta}$ consists of topological discs. 

A pair $(X,{\mathcal G})$, where $X$ is a closed orientable surface and ${\mathcal G} \subset X$ is a bipartite graph such that the connected components of $X\setminus {\mathcal G}$ are topological discs, is called a {\it dessin d'enfant}, as introduced by Grothendieck~\cite{Gro} (see also, the recent books \cite{GiGo,JW}). In particular, if $(S,\beta)$ is a Belyi pair, then $(S,{\mathcal G}_{\beta})$ is a dessin d'enfant. Conversely, if $(X,{\mathcal G})$ is a dessin d'enfant, then the uniformization theorem asserts the existence of a (unique up to isomorphisms) Belyi pair $(S,\beta)$ and of an orientation-preserving homeomorphism $f:S \to X$ such that $f({\mathcal G}_{\beta})={\mathcal G}$ ($f$ sends black vertices to black vertices). This process provides of a natural equivalence between (isomorphism classes of) Belyi pairs and (isomorphism classes of) dessins d'enfants and, by Belyi's theorem,  there is a natural action of the absolute Galois group ${\rm Gal}(\overline{\mathbb Q}/{\mathbb Q})$ on (isomorphism classes of) dessins d'enfants. It is well known that this action is faithfull \cite{GiGo1,Gro,Schneps}. Associated to a Belyi pair $(S,\beta)$ (respectively, a dessin d'enfant ${\mathcal D}=(X,{\mathcal G})$) is its group ${\rm Aut}^{+}(S,\beta)$ (respectively, ${\rm Aut}^{+}({\mathcal D})$) consisting of those conformal automorphisms $\varphi$ of $S$ such that $\beta=\beta \circ \varphi$ (respectively, the group of homotopy class of those orientation-preserving self-homeomorphisms of $X$ keeping invariant ${\mathcal G}_{\beta}$ and the colour of the vertices). The Belyi pair (respectively, dessin d'enfant) is called {\it regular} if $\beta$ is a regular branched cover with deck group ${\rm Aut}^{+}(S,\beta)$ (respectively, the group ${\rm Aut}^{+}({\mathcal D})$ acts transitively on the set of edges of ${\mathcal G}$). In \cite{BCG,GoJa} it was observed that the absolute Galois group also acts faithfully at the level of regular dessins d'enfants.  

A finite group $G$ appears as the group of automorphisms of a regular dessin d'enfant if and only if there is a closed Riemann surface $S$ over which $G$ acts as a group of conformal automorphisms such that $S/G$ has genus zero and exactly three cone points; we say that the conformal $G$-action on $S$ is {\it triangular}. This is equivalent for $G$ to be generated by two elements. There is a bijection between (i) equivalence classes of regular dessins d'enfants with automorphism group $G$ and (ii) $G$-conjugacy classes of pairs of generators of $G$.
Examples of groups generated by two elements are the {\it dicyclic groups}
\begin{equation}\label{eqq1}
G_{n}=\langle x,y: x^{2n}=1, y^{2}=x^{n}, yxy^{-1}=x^{-1}\rangle, \; n \geq 2.
\end{equation}

In \cite{MZ7} it was proved that the strong symmetric genus $\sigma^{0}(G_{n})$ (the lowest genus over which $G_{n}$ acts as a group of conformal automorphisms) is equal to (i) $n$, for $n$ even and (ii) $n-1$, for $n$ odd. For $n$ even, such a minimal conformal action happens for $S/G_{n}$ of signature $(0;4,4,2n)$ and, for $n$ odd, it happens for signature $(0;4,4,n)$. In Theorem \ref{triangular} we prove that, up to isomorphisms, there is only one triangular action of $G_{n}$ when $n \geq 2$ is even, and that there are exactly two when $n \geq 3$ is odd. In any of these cases, the triangular action is produced in a familiar kind of hyperelliptic Riemann surfaces (for example, if $n$ is odd, then these are the Accola-Maclachlan surfaces \cite{Maclachlan}).
In Remark \ref{grafo} there are described the corresponding monodromy groups and the associated bipartite graph.

A conformal action of a finite group $G$ over a closed Riemann surface $S$ is called {\it purely-non-free} if every element acts with a non-empty set of fixed points. This type of conformal actions where discussed by J. Gilman in her study of adapted basis for the derived action on the first integral homology group \cite{gilman}. In \cite{GH} it was observed that every finite group $G$ acts purely-non-free on some Riemann surface. We define the {\it pure symmetric genus}  $\sigma_{p}(G)$ as  the minimal genus  on which $G$ acts purely-non-free. For $G$ either cyclic, dihedral, ${\mathcal A}_{4}$, ${\mathcal A}_{5}$ and ${\mathfrak S}_{4}$ it holds that $\sigma_{p}(G)=\sigma^{0}(G)=0$. In \cite{GH} it was computed $\sigma_{p}(G)$ for $G$ an abelian group (in general, $\sigma^{0}(G)<\sigma_{p}(G)$). In Theorem \ref{triangular} we observe that the triangular action of $G_{n}$ on a closed Riemann surface of genus $n$ is purely-non-free and that on genus $n-1$ is not, in particular, $\sigma_{p}(G_{n})=n$ (see Corollary \ref{coro1}).

A conformal action of a finite group $G$ on a Riemann surface $S$ of genus $g \geq  2$ induces a natural ${\mathbb Q}$-algebra homomorphism $\rho : {\mathbb Q}[G]\to {\rm End}_{\mathbb Q}(JS)$, from the group algebra ${\mathbb Q}[G]$ into the endomorphism algebra of the jacobian variety $JS$. The factorization of ${\mathbb Q}[G]$ into a product of simple algebras yields a decomposition of $JS$ into abelian subvarieties, called the {\it isotypical decomposition} (see, for instance, \cite{CR,LR}). In the case of the conformal triangular actions of $G_{n}$, we note in Theorem \ref{triangular} that, for every non-trivial subgroup $H$ of $G_{n}$, the quotient orbifold $S/H$ has genus zero. In particular, this asserts that the isotypical decomposition of $JS$, induced by the action of $G_{n}$, is trivial (that is, there is only one factor) and the  induced action of any non-trivial subgroup of $G_{n}$ on the jacobian variety $JS$ acts with isolated fixed points (see  Corollary \ref{isolado}).

As $G_{n}$ has index two subgroups, it can be realized as a group of conformal/anticonformal automorphisms of suitable Riemann surfaces, such that it admits anticonformal ones. It is well known that $G_{n}$ acts with such a property in genus $\sigma(G_{n})=1$ \cite{MZ7} (this is the symmetric genus of $G_{n}$ \cite{Burn,Hurwitz,T}). This can be easily seen by considering the Euclidean planar group $\Delta=\langle a(z)=i(1-\overline{z}), b(z)=-i\overline{z}+2+i\rangle=\langle a,b: a^{2}b^{2}=1\rangle$, acting on the complex plane ${\mathbb C}$, and consider the surjective homomorphism $\theta:\Delta \to G_{n}$, defined by $\theta(a)=y$ and $\theta(b)=yx$. If $\Gamma$ is the kernel of $\theta$, then ${\mathbb C}/\Gamma$ is a genus one Riemann surface admitting the group $G_{2n}$ as a group of conformal/anticonformal automorphisms (the element $x$ acts conformally and $y$ anticonformally).  In Theorem \ref{hyper} we observe that the minimal genus $\sigma^{hy}(G_{n})\geq 2$ over which $G_{n}$ acts as a group of conformal/anticonformal automorphisms, and admitting anticonformal ones, is $n+1$ for $n \geq 2$ even, and $2n-2$ for $n \geq 3$ odd.

Moduli space of closed Riemann surface of genus $g \geq 2$, a complex orbifold of dimension $3(g-1)$, has a natural real structure (this coming from complex conjugation). The fixed points of such a real structure, its real points, are the (isomorphism classes of) closed Riemann surfaces admitting anticonformal automorphism. Inside such real locus are those surfaces which cannot be defined by algebraic curves over the reals (this is equivalent to say that they do not have anticonformal involutions); these are called {\it pseudo-real} Riemann surfaces. 
In general, a finite group might not be realized asthe group of conformal/anticonformal automorphisms, admitting anticonformal ones, of a pseudo-real Riemann surface; for instance, in \cite{BCC} it was observed that a necessary condition for that to happen is for the group to have order a multiple of $4$. 
In Theorem \ref{pseudo}, we construct, for each integer $q \geq 2$, a pseudo-real Riemann surface of genus $g=(l-1)(2n-1)$, where $l=n(2q-1)$, whose full group of conformal/anticonformal automorphisms is $G_{n}$.

\section{Main results}

\subsection{Triangular conformal actions of $G_{n}$}

We proceed to describe, for each integer $n \geq 2$, a very well known families of hyperelliptic Riemann surfaces admitting the dicyclic group $G_{n}$ as a group of conformal automorphisms acting in a triangular way. We also describe their full groups of conformal/anticonformal automorphisms. Then, we proceed to prove that these are the only examples of Riemann surfaces admitting a conformal triangular action of $G_{n}$.
We set $\rho_{m}:=e^{2\pi i/m}$.

\subsubsection{\bf The family $S_{n}$}
For $n \geq 2$, let $S_{n}$ be the hyperelliptic Riemann surface, of genus $n$, defined by 
$$w^{2}=z(z^{2n}-1).$$

The following are conformal automorphisms of $S_{n}$:
$$u(z,w)=(\rho_{2n} z, \rho_{4n} w), \; y(z,w)=(1/z, iw/z^{n+1}).$$

We observe that $u^{4n}=y^{4}=1$ and $u \circ y^{-1}= y \circ u^{-1}$,
and the quotient orbifold $S_{n}/\langle u,y\rangle$ has signature $(0;2,4,4n)$. If $n \geq 3$, this is a maximal signature \cite{Singerman}, so ${\rm Aut}^{+}(S_{n})=\langle u, y\rangle$ in these cases (groups of order $8n$). For $n=2$, the surface $S_{2}$ has the extra conformal automorphismof order three $t(z,w)=(i(1-z)/(1+z),2(1+i)w/(z+1)^3)$, and ${\rm Aut}^{+}(S_{2})=\langle u,y,t\rangle$ (a group of order $48$). The conjugation $\tau(z,w)=(\overline{z},\overline{w})$ is an anticonformal involution of $S_{n}$, acting with fixed points, satisfying that $\tau \circ u \circ \tau=u^{-1}$, $\tau \circ y \circ \tau=y^{-1}$, and ${\rm Aut}(S_{n})=\langle {\rm Aut}^{+}(S_{n}),\tau\rangle$.
If we set $x(z,w)=u^{2}(z,w)=(\rho_{n} z, \rho_{2n} w)$, then $y^{2}(z,w)=x^{n}(z,w)=(z,-w)$, 
$x^{2n}=1$,  $y^{-1}xy=x^{-1}$ and  $G_{n} \cong \langle x, y\rangle$. The quotient $S_{n}/G_{n}$ has signature $(0;4,4,2n)$ and $\pi:S_{n} \to \widehat{\mathbb C}$, defined by $\pi(z,w)=-(z^{n}+1/z^{n}-2)/4$, is a regular branched cover with deck group $\langle x,y\rangle$.

\begin{rema}\label{otro1}
Another (singular) model of $S_{n}$ is given by the affine algebraic curve
$$S_{n}: v^{2n}=u^{n}(u-1)(u+1)^{2n-1}.$$

In this model, $G_{n}$ is generated by 
$x(u,v)=(u,\rho_{2n} v)$ and  $y(u,v)=\left(-u,\frac{v^{2n-1}}{u^{n-1} (u+1)^{2n-2}}\right)$.
\end{rema}

\subsubsection{\bf The family $R_{n}$: Accola-Maclachlan surfaces}
For $n \geq 3$ odd, let $R_{n}$ be the hyperelliptic Riemann surface, of genus $n-1$, defined by 
$$w^{2}=z^{2n}-1.$$

These Riemann surfaces are called Accola-Maclachlan surfaces \cite{Maclachlan} and these are known to be the surfacs with group of conformal automorphisms of order $8g+8=8n$ (see also \cite{BBCGG}). The following are conformal automorphisms of $R_{n}$:
$$u(z,w)=(\rho_{2n} z, w), \; y(z,w)=(1/z, iw/z^{n}).$$

The quotient orbifold $R_{n}/\langle u,y\rangle$ has signature $(0;2,4,2n)$. If $n \geq 5$, this is a maximal signature \cite{Singerman}, so ${\rm Aut}^{+}(R_{n})=\langle u, y\rangle$ in these cases. This also holds for $n=3$. The conjugation $\tau(z,w)=(\overline{z},\overline{w})$ is an anticonformal involution of $R_{n}$, acting with fixed points, satisfying that $\tau \circ u \tau=u^{-1}$, $\tau \circ y \circ \tau=y^{-1}$, and ${\rm Aut}(R_{n})=\langle {\rm Aut}^{+}(R_{n}),\tau\rangle$.
If we set $x(z,w)=y^{2}\circ u^{2}(z,w)=(\rho_{n} z, - w)$, then $y^{2}(z,w)=x^{n}(z,w)=(z,-w)$, 
$x^{2n}=1$,  $y^{-1}xy=x^{-1}$ and  $G_{n} \cong \langle x, y\rangle$. The quotient $R_{n}/G_{n}$ has signature $(0;4,4,n)$ and $\pi:R_{n} \to \widehat{\mathbb C}$, defined by $\pi(z,w)=-(z^{n}+1/z^{n}-2)/4$, is a regular branched cover with deck group $\langle x,y\rangle$.

\begin{rema}\label{otro2}
Another model of $R_{n}$ is given by the affine algebraic curve
$$R_{n}: v^{2n}=u^{n}(u-1)^{2}(u+1)^{2n-2}.$$

In this model, $G_{n}$ is generated by 
$x(u,v)=(u,\rho_{2n} v)$ and $y(u,v)=\left(-u,\frac{\rho_{4n} u(u^{2}-1) }{v}\right)$.
\end{rema}

\begin{rema}[Complex multiplication]
Both families of surfaces $S_{n}$ and $R_{n}$ have the property that their jacobian varieties have complex multiplication \cite[Thm. 2.4.4]{Rohde}
\end{rema}

\subsubsection{\bf Some facts on the dicyclic group $G_{n}$}\label{Gn}
If $G_{n}$ is the dicyclic group, presented as in (\ref{eqq1}), then: (1) it has order $4n$, 
(2) $\langle x \rangle \cong {\mathbb Z}_{2n}$ is a normal subgroup of index two, 
(3)  every element of $G_{n}-\langle x \rangle$ has order $4$ and 
(4) there are exactly $n+3$ conjugacy classes, with representatives given in the following table
\smallskip
\begin{center}
\begin{tabular}{c|c|c|c|c|c|c|c|c}
{\rm Rep.} & 1 & x & $x^{2}$ & $\cdots$  & $x^{n-1}$ & $x^{n}$ & y & xy \\\hline 
{\rm Size} & 1 & 2 & 2 & $\cdots$ & 2 & 1 & $n$ & $n$
\end{tabular}
\end{center}

\subsubsection{\bf On the uniqueness of the triangular action of $G_{n}$}

\begin{theo}\label{triangular}
Let $n \geq 2$ be an integer and let $S$ be a closed Riemann surface admitting the dicyclic group $G_{n}$, presented as in \eqref{eqq1}, as a group of conformal automorphisms and acting in a triangular way, that is, $S/G_{n}$ has triangular signature. Then either one of the following ones hold.

\begin{enumerate}
\item[I.-] $S/G_{n}$ has signature $(0;4,4,2n)$ and $S$ is isomorphic to $S_{n}$. The group $G_{n}$ acts  purely-non-free. The number of fixed points of the automorphisms $x$, $y^{2}=x^{n}$, $y$ and $xy$ are, respectively, $2$, $2+2n$, $2$ and $2$.
If $H$ is any non-trivial subgroup of $G_{n}$, then $S/H$ has genus zero.

\item[II.-] $S/G_{n}$ has signature $(0;4,4,n)$ and $S$ is isomorphic to $R_{n}$, in which case $n \geq 3$ is odd. The only elements of $G_{n}$ acting with fixed points on $R_{n}$ are $x^{2}, x^{4},\ldots, x^{2n-2}, x^{n}$ and those of the form $x^{a}y$. In particular, the action of $G_{n}$ is not purely-non-free.
If $H$ is any non-trivial subgroup of $G_{n}$, then $S/H$ has genus zero.

\end{enumerate}

\end{theo}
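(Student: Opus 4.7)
The approach is to classify surface-kernel epimorphisms $\theta \colon \Delta(a,b,c) \twoheadrightarrow G_n$ up to the natural action of $\mathrm{Aut}(G_n)$ and the orientation-preserving mapping-class group of the orbifold base, and then to verify the detailed assertions on the explicit models $S_n$ and $R_n$.

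First, I would restrict the signature. Setting $\alpha=\theta(t_1), \beta=\theta(t_2), \gamma=\theta(t_3)$, so $\alpha\beta\gamma=1$ and $\langle \alpha,\beta,\gamma\rangle=G_n$, reduction modulo the index-two normal subgroup $\langle x \rangle$ gives three elements of $\mathbb{Z}_2$ that sum to zero and generate, so exactly two of the three lie outside $\langle x \rangle$. Since every element of $G_n \setminus \langle x \rangle$ squares to $y^2=x^n$, those two have order $4$. Hence any triangular signature has the form $(0;4,4,c)$ with $c\mid 2n$.

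Second, I would determine the admissible $c$. Writing $\alpha=x^iy$, $\beta=x^jy$, the relation $yxy^{-1}=x^{-1}$ yields $\alpha\beta=x^{i-j+n}$, and hence $\gamma$ has order $c=2n/\gcd(i-j+n,2n)$. Surjectivity of $\theta$ is equivalent to $\gcd(i-j,n)=1$, as verified using $\alpha\beta^{-1}=x^{i-j}$ together with $\alpha^2=x^n$. Since $\gcd(i-j+n,n)=\gcd(i-j,n)=1$, the order formula simplifies to $c=2n/\gcd(i-j+n,2)$, and a parity analysis yields $c=2n$ when $n$ is even and $c\in\{n,2n\}$ when $n$ is odd, the two subcases distinguished by the parity of $i-j$.

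Third, I would prove uniqueness of the $G_n$-action for each admissible signature. The group $\mathrm{Aut}(G_n)$ contains the family $\phi_{a,b}\colon x\mapsto x^a,\, y\mapsto x^by$ with $\gcd(a,2n)=1$ and $b\in\mathbb{Z}_{2n}$, which acts transitively both on elements of $\langle x\rangle$ of each fixed order (via $a$) and on the index $i$ of $\alpha=x^iy$ (via $b$). These normalizations reduce each admissible signature to a single generating triple up to $\mathrm{Aut}(G_n)$; the braid-type symmetry swapping $\alpha\leftrightarrow\beta$, present because the first two cone orders coincide, is realized by $\phi_{-1,b}$ for a suitable $b$, as direct substitution verifies. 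Thus each admissible signature yields exactly one isomorphism class of triangular $G_n$-actions, and comparison with the given realizations on $S_n$ and $R_n$ identifies them with $S_n$ in Case I and $R_n$ in Case II.

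Finally, the remaining assertions are verified on the affine models. On $S_n$: solving the fixed-point equations shows that $x^k$ with $k\not\equiv 0,n\pmod{2n}$ fixes only $(0,0)$ and $\infty$, that $x^n$ is the hyperelliptic involution with its $2n+2$ Weierstrass points, and that each $x^ay$ fixes exactly two $(2n)$-th roots of unity on the locus $w=0$, from which purely-non-freeness follows. On $R_n$: the same procedure shows that $x^k$ with $k$ odd and $k\neq n$ is the composition of a nontrivial rotation in $z$ with the hyperelliptic involution $x^n$ and therefore acts freely, breaking purely-non-freeness. The genus-zero quotient claim is checked by running through the subgroup lattice of $G_n$ and applying Riemann--Hurwitz. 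The main obstacle I anticipate lies in the uniqueness step (verifying that the braid swap of the two order-$4$ generators is indeed realized by $\mathrm{Aut}(G_n)$), together with the subgroup-quotient analysis, which requires a careful case split on whether the subgroup contains the unique involution $x^n$.
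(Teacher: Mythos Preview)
Your approach is correct and genuinely different from the paper's. The paper argues geometrically via the intermediate quotient $S/\langle x\rangle$: it shows this is a genus-zero orbifold with cone points at $\{\pm 1,0,\infty\}$ (the first two permuted and the last two fixed by the involution induced by $y$), deduces the two possible signatures, and then obtains uniqueness from the classification of cyclic $\mathbb{Z}_{2n}$-covers of the sphere encoded by exponent tuples in $v^{2n}=u^{a}(u-1)^{b}(u+1)^{c}$. You instead classify generating triples of $G_n$ directly: reduction modulo $\langle x\rangle$ forces two order-$4$ generators, the product computation $x^{i}y\cdot x^{j}y=x^{\,i-j+n}$ together with the surjectivity condition $\gcd(i-j,n)=1$ pins down $c\in\{n,2n\}$, and the family $\phi_{a,b}\in\mathrm{Aut}(G_n)$ acts transitively on the admissible pairs $(i,j)$ for each signature. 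Your route is more elementary and self-contained (no appeal to the theory of cyclic $n$-gonal covers), while the paper's route has the advantage of producing the explicit affine models directly. One small correction: the braid swap $(\alpha,\beta,\gamma)\mapsto(\alpha\beta\alpha^{-1},\alpha,\gamma)$ sends $(y,x^{k}y,\ast)$ to $(x^{-k}y,y,\ast)$, which is realized by $\phi_{1,-k}$ (not $\phi_{-1,b}$); but this point is moot, since your $\mathrm{Aut}(G_n)$-transitivity already shows there is a single orbit of triples for each signature, so the braid move cannot produce anything new.
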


\begin{rema}\label{grafo}
Let us consider the following permutations of ${\mathfrak S}_{4n}$
$$
\eta=(1,2,\ldots,2n)(2n+1,2n+2,\ldots,4n),\;
\sigma=\prod_{k=1}^{n}(k,4n+1-k,n+k,3n+1-k).
$$

Then, $\eta^{2n}=1$, $\sigma^{-1}\eta\sigma=\eta^{-1}$, 
$\eta^{n}=\prod_{k=1}^{n}(k,n+k)(2n+k,3n+k)=\sigma^{2}$, 
and $\langle \eta,\sigma\rangle \cong G_{n}$, where the isomorphism is the one taking $\eta$ to $x$ and $\sigma$ to $y$. 
 (1) If $\tau=\sigma^{3}\eta$, then $\tau\sigma\eta=1$, and the pair $(\sigma,\tau)$ determines the monodromy group associated to the regular dessin d'enfant of signature $(0;4,4,2n)$ as described in the above theorem. This permits to see that the associated bipartite graph of this dessin d'enfant is  ${\mathcal C}_{2n}^{2}$, this being defined by taking as its vertices the $2n$-roots of unity (they are painted black an white in alternating way) and we replace each arc of the unit circle joining two consecutive roots (following one of the orientations of the unite circle) by two edges connecting them.
(2) If $n\geq 3$ is odd, and  $\tau=\eta^{n-2}\sigma$, then $\tau\sigma\eta^{2}=1$, where $\eta^{2}=
(1,3,\ldots,2n-1)(2,4,\ldots,2n)(2n+1,2n+3,\ldots,4n-1)(2n+2,2n+4,\ldots,4n)$, and the pair $(\sigma,\tau)$ determines the monodromy of the dessin of signature $(0;4,4,n)$.
\end{rema}

\begin{coro}[On the pure symmetric genus of $G_{n}$]\label{coro1}
$\sigma_{p}(G_{n})=n$.
\end{coro}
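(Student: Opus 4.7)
My approach is to prove the two inequalities $\sigma_p(G_n)\leq n$ and $\sigma_p(G_n)\geq n$ separately. The upper bound is essentially free: Case~I of Theorem~\ref{triangular} already exhibits a purely-non-free conformal action of $G_n$ on the surface $S_n$, which has genus $n$.

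For the lower bound I would split on the parity of $n$. If $n$ is even, then $\sigma^0(G_n)=n$ by \cite{MZ7}, and since every purely-non-free conformal action is, in particular, a conformal action, one has $\sigma_p(G_n)\geq\sigma^0(G_n)=n$. This disposes of the even case essentially for free.

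The odd case is more delicate because $\sigma^0(G_n)=n-1$ is realized on $R_n$ of Case~II, whose action is \emph{not} purely-non-free, so the strong symmetric genus alone is insufficient. Instead, I plan to analyze the signature $(h;m_1,\ldots,m_r)$ of a hypothetical purely-non-free action. The purely-non-free condition forces every maximal cyclic subgroup of $G_n$ to be conjugate into some point stabilizer $\langle c_i\rangle$. From the classification of cyclic subgroups of $G_n$ recorded in Section~\ref{Gn} --- namely that $\langle x\rangle$ is the unique cyclic subgroup of order $2n$, and that for $n$ odd the subgroups $\langle x^{a}y\rangle$ are the only cyclic subgroups containing an element of order $4$ (because $\langle x\rangle$ then has no element of order $4$) --- I conclude that the multiset $\{m_1,\ldots,m_r\}$ must contain both $2n$ and $4$.

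Substituting this into Riemann-Hurwitz and splitting off the contributions of these two mandatory branch points yields
\[
g \;=\; 2n\Bigl(2h-\tfrac{1}{4}-\tfrac{1}{2n}+\sum_{k}\bigl(1-\tfrac{1}{m_k}\bigr)\Bigr)+1,
\]
where the sum ranges over any additional branch points and each $m_k$ is the order of an element of $G_n$. A short case analysis on $h$ and on the number and orders of the extra branch points shows that the minimum over admissible integer values of $g$ is exactly $n$, realized only by the signature $(0;2n,4,4)$ --- that is, precisely the signature of Case~I. The main obstacle, rather than a genuine conceptual difficulty, is the integrality check: one has to verify that the parity constraints imposed by $n$ odd eliminate the a priori competing candidates (for example $g=n/2$ from the signature $(0;2n,4,2)$, or $g=3n/2$ from $(0;2n,4,2,2)$, neither of which is an integer), so that no purely-non-free action can sneak in below genus $n$.
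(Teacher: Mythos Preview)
Your argument is correct. The upper bound and the even case are handled exactly as the paper intends, and your treatment of the odd case is sound: the observation that a purely-non-free action forces both $\langle x\rangle$ and $\langle y\rangle$ to occur as point stabilizers (hence $2n$ and $4$ among the branch orders, the latter because for $n$ odd no power of $x$ has order $4$) is valid, your displayed Riemann--Hurwitz formula is correct, and the residual case analysis genuinely leaves $(0;4,4,2n)$ as the unique admissible signature with $g\leq n$.

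The paper, by contrast, does not carry out this signature analysis. It treats Corollary~\ref{coro1} as an immediate consequence of Theorem~\ref{triangular} together with the result of \cite{MZ7} that $\sigma^{0}(G_n)=n-1$ for $n$ odd is realized by the signature $(0;4,4,n)$: since Theorem~\ref{triangular} shows that this (unique) triangular action on genus $n-1$ is \emph{not} purely-non-free, one concludes $\sigma_p(G_n)>n-1$. This is shorter but implicitly uses that \emph{every} conformal $G_n$-action on genus $n-1$ is the triangular one of Case~II, a fact borrowed from (or left to) \cite{MZ7}. Your route is more self-contained --- it never needs to classify all actions on genus $n-1$, only to bound the genus of purely-non-free ones from below --- at the cost of a small case check. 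One minor remark: your example $(0;2n,4,2,2)$ with $g=3n/2$ is not really a ``competing candidate'' since $3n/2>n$ regardless of integrality; the only case where the parity/integrality argument is actually doing work is $r=3$ with $m_3\in\{2,3\}$.
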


\subsubsection{\bf Jacobian variety for the triangular actions of $G_{n}$}
Let $S$ be a closed Riemann surface admitting a triangular conformal action of $G_{n}$. Then there is natural action of $G_{n}$ as group of holomorphic group-automorphisms of the jacobian variety $JS$. If $H$ is a non-trivial subgroup of $G_{n}$, then the previous them asserts that $S/H$ has genus zero, which means that the induced action of $H$ on $JS$ only has isolated fixed point set. As a consequence, we obtain the following.

\begin{coro}\label{isolado}
Let $S$ be a closed Riemann surface admitting a triangular conformal action of $G_{n}$. Then the induced action of $G_{n}$ on the jacobian variety $JS$ only has isolated fixed points and, in particular, the isotypical decomposition of $JS$, induced by the action of $G_{n}$, produces only one factor.
\end{coro}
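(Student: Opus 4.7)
My approach is to derive both assertions of the corollary directly from Theorem~\ref{triangular}, which supplies the crucial fact that $g(S/H) = 0$ for every non-trivial subgroup $H \leq G_n$. The main tool is the classical isogeny between the identity component of the $H$-invariant abelian subvariety $JS^H$ and the jacobian $J(S/H)$ of the quotient curve: pulling back holomorphic one-forms along the quotient map $\pi_H \colon S \to S/H$ gives an embedding $J(S/H) \hookrightarrow JS$ whose image is isogenous to $(JS^H)^0$ (see \cite{CR,LR}). Since $g(S/H) = 0$ forces $J(S/H) = 0$, we conclude $\dim JS^H = 0$, so $JS^H$ is a finite set of points. Specializing to $H = \langle g \rangle$ for each non-identity $g \in G_n$ immediately yields the isolated-fixed-points statement.

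For the ``only one factor'' claim, write the isotypical decomposition as $JS \sim \prod_W B_W$, indexed by the irreducible rational representations $W$ of $G_n$, with each $B_W$ a $G_n$-stable abelian subvariety (by \cite{CR,LR}). The component $B_{\mathbf{1}}$ attached to the trivial representation is isogenous to $J(S/G_n)$, which vanishes since $S/G_n$ has genus zero. For any non-trivial $W$ admitting a non-zero $H$-invariant vector for some non-trivial $H \leq G_n$, the $H$-invariant part of $B_W$ would be positive-dimensional and would sit inside $JS^H$, contradicting $\dim JS^H = 0$. Consulting the representation-theoretic data for $G_n$ recorded in \S\ref{Gn}, one checks that exactly one irreducible rational representation of $G_n$ has the property that no non-trivial subgroup fixes a non-zero vector, so its isotypical factor is the only non-vanishing component of $JS$.

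The isolated-fixed-points conclusion is essentially a one-line consequence of the $J(S/H) \sim JS^H$ isogeny together with Theorem~\ref{triangular}. The main technical step is the final verification: identifying, through the conjugacy-class and character-theoretic data of $G_n$ summarized in \S\ref{Gn}, the unique irreducible rational representation whose underlying vector space has no non-zero $H$-invariant vectors for any non-trivial subgroup $H$, so that the purity constraint from the previous paragraph isolates a single surviving factor in the isotypical decomposition.
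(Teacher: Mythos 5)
Your argument is correct and follows the same route as the paper: Theorem~\ref{triangular} gives $g(S/H)=0$ for every non-trivial $H\leq G_n$, the isogeny $J(S/H)\sim (JS^H)^0$ then forces $\dim JS^H=0$, and this kills every isotypical factor attached to a rational irreducible representation having a non-zero vector fixed by some non-trivial subgroup. In fact you supply more detail than the paper (which simply states the one-factor claim as a consequence); your only deferred step, that exactly one rational irreducible representation of $G_n$ has no non-zero $H$-invariant vectors for any non-trivial $H$ (namely the one assembled from the faithful two-dimensional complex representations $V_j$, $\gcd(j,2n)=1$), is a true and routine verification.
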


\subsection{On the hyperbolic genus of $G_{n}$}
Next results describes the minimal genus $\sigma^{hyp}(G_{n}) \geq 2$ such that $G_{n}$ acts as a group of conformal/anticonformal automorphisms and containing anticonformal elements.

\begin{theo}\label{hyper}
If $n \geq 2$ is even, then $\sigma^{hyp}(G_{n})=n+1$. If $n \geq 3$ is odd, then $\sigma^{hyp}(G_{n})=2n-2$.
\end{theo}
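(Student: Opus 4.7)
The plan is to obtain matching upper and lower bounds for $\sigma^{hyp}(G_n)$ in each parity class via the NEC-group correspondence. Any conformal/anticonformal $G_n$-action with anticonformal elements on a closed Riemann surface $S$ is uniformized by a smooth epimorphism $\theta\colon\Lambda\to G_n$ from an NEC group $\Lambda$ with orientation-reversing elements, and the genus satisfies $g = 1 + 2n\,\mu(\Lambda)/(2\pi)$. A preliminary remark pins down the shape of $\Lambda$: the unique involution of $G_n$ is the central element $x^n = y^2$, and it lies in every index-two subgroup of $G_n$ (namely $\langle x\rangle$ for $n$ odd, and $\langle x\rangle$, $\langle x^2,y\rangle$, $\langle x^2,xy\rangle$ for $n$ even). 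Hence $G_n$ has no anticonformal involutions, so $\Lambda$ has no reflections; its signature is $(\gamma;-;[m_1,\ldots,m_r];\{\})$ with $\gamma\geq 1$, and $\mu(\Lambda)/(2\pi) = \gamma - 2 + \sum_{i=1}^{r}(1 - 1/m_i)$.

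For the upper bounds I would exhibit explicit epimorphisms. When $n$ is even, take $\Lambda$ with signature $(1;-;[4,4];\{\})$, presented as $\langle x_1,x_2,d\mid x_1^4=x_2^4=1,\ x_1x_2d^2=1\rangle$, and set $\theta(x_1)=\theta(x_2)=y$, $\theta(d)=xy$. All generators have order $4$, the relation becomes $y^2(xy)^2 = x^n\cdot x^n = 1$, the product $y\cdot(xy) = x^{n-1}$ generates $\langle x\rangle$ because $\gcd(n-1,2n)=1$, and one computes $\theta(\Lambda^+) = \langle x^2,y\rangle$, so $\theta(d)$ is anticonformal; the genus is $1 + 2n\cdot(1/2) = n+1$. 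When $n$ is odd, take $\Lambda$ with signature $(1;-;[n,2n];\{\})$ and set $\theta(x_1)=x^2$, $\theta(x_2)=x^{n-2}$, $\theta(d)=y$. The orders are correct ($x^2$ has order $n$, and $x^{n-2}$ has order $2n$ because $\gcd(n-2,2n)=1$ for $n$ odd); the relation reads $x^2\cdot x^{n-2}\cdot y^2 = x^{2n} = 1$; and $\gcd(2,n-2)=1$ gives surjectivity. The genus is $1 + (2n-3) = 2n-2$.

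For the lower bounds I would enumerate the NEC signatures of smaller area and show each is infeasible. The crucial observation is that $\theta(d_j)^2$ always lies in $\langle x^2\rangle$, irrespective of the choice of conformal subgroup $H$: for $H = \langle x\rangle$ one has $\theta(d_j)^2 = x^n$, and for $H = \langle x^2,y\rangle$ or $\langle x^2,xy\rangle$ both types of anticonformal element ($x^{\text{odd}}$ and $x^{\text{odd}}y$) still have squares in $\langle x^2\rangle$. Combined with $\theta(x_i) \in H$ of order $m_i$, the relation $\prod_i\theta(x_i)\,\prod_j\theta(d_j)^2 = 1$, and the surjectivity requirement $\theta(\Lambda^+) = H$, this leaves very few possibilities. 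In the odd case, decomposing $\mathbb{Z}_{2n} \cong \mathbb{Z}_2\times\mathbb{Z}_n$ and writing $\theta(x_i) = x^{a_i}$ with $a_i = (\epsilon_i,b_i)$, the relation becomes $\sum\epsilon_i\equiv\gamma\pmod 2$ and $\sum b_i\equiv 0\pmod n$, while surjectivity demands $\gcd(b_1,\ldots,b_r,n)=1$. For $\gamma=1,\,r=2$ this forces $b_2\equiv -b_1$ with $\gcd(b_1,n)=1$, so $(m_1,m_2)=(n,2n)$; the cases $\gamma=1,\,r\geq 3$ and $\gamma\geq 2$ always give strictly larger area. In the even case, the signatures of area strictly less than $1/2$ are precisely $(1;-;[m_1,m_2];\{\})$ with $1/m_1+1/m_2 > 1/2$, that is $(m_1,m_2)$ is $(2,m)$ for some $m\geq 3$, or $(3,3)$, $(3,4)$, $(3,5)$; for each, either some $m_i=2$ forces $\theta(x_i)=1$ via the relation together with the constraint $\theta(d_j)^2\in\langle x^2\rangle$, or the image $\theta(\Lambda^+)$ sits inside $\langle x\rangle$ and so cannot equal any of the three candidate $H$'s.

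The main obstacle will be this last enumeration, especially in the even case because of the three possible index-two conformal subgroups and the different forms of $\theta(d_j)^2$ they lead to; but the uniform observation $\theta(d_j)^2\in\langle x^2\rangle$ reduces the analysis to a small number of congruence checks in $\mathbb{Z}_{2n}$.
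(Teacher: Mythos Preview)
Your overall strategy matches the paper's: uniformize by an NEC group, observe that the unique involution $x^{n}$ lies in every index-two subgroup so there are no reflections (hence the signature is $(\gamma;-;[m_{1},\dots,m_{r}];\{\,\})$), build explicit epimorphisms for the upper bounds, and minimise the area for the lower bounds. Your upper-bound constructions are correct and essentially the paper's (same signatures, slightly different epimorphisms).

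There is, however, a genuine gap in your even-case lower bound. For the signatures $(1;-;[3,3])$, $(1;-;[3,4])$, $(1;-;[3,5])$ you argue that all $\theta(x_{i})$ lie in $\langle x\rangle$, hence $\theta(\Lambda^{+})\subset\langle x\rangle$, and conclude this ``cannot equal any of the three candidate $H$'s''. But $H_{1}=\langle x\rangle$ \emph{is} one of the three candidates, so this does not exclude the case $G_{n}^{+}=H_{1}$. One must still rule out a smooth surjection with conformal part $H_{1}$ for these small signatures. The paper does this by a separate analysis of the $H_{1}$ case: writing $m_{1}=AB$, $m_{2}=AC$ with $\gcd(B,C)=1$ and using the relation together with $\theta(d)^{2}=x^{n}$, one finds $2n=ABC$ and that for $n$ even the only possibility is $m_{1}=m_{2}=2n$, giving genus $2n-1\geq n+1$. (Alternatively, for each of $(3,3),(3,4),(3,5)$ one checks directly that the congruence $a_{1}+a_{2}\equiv n\pmod{2n}$ has no solution with $\mathrm{ord}(x^{a_{i}})=m_{i}$.) Your ``uniform observation'' $\theta(d_{j})^{2}\in\langle x^{2}\rangle$ is also literally false when $n$ is odd (then $\theta(d_{j})^{2}=x^{n}\notin\langle x^{2}\rangle$), though your actual odd-case argument via $\mathbb{Z}_{2}\times\mathbb{Z}_{n}$ does not use it.

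A smaller imprecision: the claim that $\gamma=1,\,r\geq 3$ or $\gamma\geq 2$ ``always give strictly larger area'' is not literally true (e.g.\ $(1;-;[2,2,2])$ has area $1/2$, and $(2;-;[m])$ has area $1-1/m$, both of which can be $\leq$ the target for suitable $n$); what is true is that these smaller-area signatures admit no smooth surjection onto $G_{n}$, and that needs a short feasibility check rather than an area comparison.
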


\subsection{Pseudo-real actions of $G_{n}$}
Next, we provide pseudo-real Riemann surfaces so that $G_{n}$ is the full group of conformal/anticonformal automorphisms.

\begin{theo}\label{pseudo}
If $n \geq 2$, $q \geq 2$ and $l=n(2q-1)$, then there are pseudo-real Riemann surfaces $S$ of genus $g=(l-1)(2n-1)$
such that ${\rm Aut}(S)=G_{n}$ and ${\rm Aut}^{+}(S)=\langle x \rangle$.
\end{theo}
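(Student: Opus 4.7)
The plan is to realise $S$ as $\mathbb{H}/\Gamma$, where $\Gamma$ is the kernel of a surface-kernel epimorphism $\theta\colon\Lambda\to G_{n}$ from an NEC group $\Lambda$ with signature of the form $(h;-;[2n,\ldots,2n];\{\,\})$, having $r$ copies of the period $2n$. The minus sign (non-orientable quotient) and the absence of period-cycles (no reflections) are both crucial: the former gives the glide reflections whose $\theta$-images produce anticonformal automorphisms of $S$, while the latter is what will force the action to be pseudo-real. Riemann--Hurwitz applied to the index-$4n$ inclusion $\Gamma\trianglelefteq\Lambda$ gives $2n(h-1)+r(2n-1)=n(2q-1)(2n-1)$; this admits the integer solution $r=n$, $h=(2n-1)(q-1)+1$, and (when parity obstructs the long relation below) also $r=3n$, $h=(2n-1)(q-2)+1$, and in either case yields the genus $g=(l-1)(2n-1)$.

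For the epimorphism I set $\theta(x_{i})=x^{a_{i}}$ with $\gcd(a_{i},2n)=1$, so that $\theta$ is faithful on each $\langle x_{i}\rangle$ of order $2n$ and $\Gamma$ is a torsion-free surface group, and $\theta(d_{j})=x^{b_{j}}y$, which by Section~\ref{Gn}(3) has order $4$ and lies in $G_{n}\setminus\langle x\rangle$. Using $(x^{b_{j}}y)^{2}=y^{2}=x^{n}$, the long relation $x_{1}\cdots x_{r}\,d_{1}^{2}\cdots d_{h}^{2}=1$ collapses to the congruence $a_{1}+\cdots+a_{r}+nh\equiv 0\pmod{2n}$ inside $\langle x\rangle$; an elementary parity argument on the units of $\mathbb{Z}/2n\mathbb{Z}$ (using $r=n$ when $n$ is even or when $n$ and $q$ are both odd, and $r=3n$ when $n$ is odd and $q$ even) provides a solution. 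Surjectivity of $\theta$ is automatic since its image contains $y$ and a unit power of $x$. Because $\Lambda$ has no reflections, every anticonformal automorphism of $S$ induced by $\Lambda$ corresponds to an element of $\Lambda\setminus\Lambda^{+}$, whose $\theta$-image lies in $G_{n}\setminus\langle x\rangle$ and hence has order $4$; together with the fact that $G_{n}$ possesses only one involution, namely $x^{n}\in\langle x\rangle$, this will imply that no anticonformal involution of $S$ exists, so $S$ is pseudo-real, once we show $\mathrm{Aut}(S)=G_{n}$.

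The principal obstacle is this last step: establishing that $\Lambda$ is the full normaliser of $\Gamma$ in $\mathrm{Aut}^{\pm}(\mathbb{H})$. Supposing a proper inclusion $\Lambda\subsetneq\Lambda^{\ast}$ with $\Gamma\trianglelefteq\Lambda^{\ast}$, Singerman's classification of NEC-signature inclusions produces a finite list of possible pairs $(\Lambda,\Lambda^{\ast})$; each has to be ruled out. The freedom given by $q\geq 2$ makes the non-orientable genus $h$ take values outside the exceptional ranges in Singerman's table, and the rigid algebraic structure of $G_{n}$ --- a unique index-$2$ subgroup $\langle x\rangle$, a unique involution $x^{n}$ (central, and conformal in our setting) --- makes any candidate overgroup $\Lambda^{\ast}/\Gamma$ structurally incompatible with our prescribed action; the verification is the technical heart of the argument. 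Once maximality is in place, $\mathrm{Aut}(S)=\Lambda/\Gamma=G_{n}$ and $\mathrm{Aut}^{+}(S)=\Lambda^{+}/\Gamma=\langle x\rangle$, completing the proof.
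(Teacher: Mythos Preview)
Your scheme is sound and yields the stated genus, but it takes a markedly more complicated route than the paper's. The paper chooses the \emph{simplest} possible NEC signature, namely $(1;-;[2n,\stackrel{l}{\ldots},2n];\{\,\})$ --- a single glide-reflection generator $\alpha$ (so the quotient is the real projective plane) together with $l=n(2q-1)$ elliptic generators $\beta_{1},\ldots,\beta_{l}$ --- and the most naive epimorphism $\Theta(\alpha)=y$, $\Theta(\beta_{j})=x$. The long relation $\alpha^{2}\beta_{1}\cdots\beta_{l}=1$ maps to $y^{2}x^{l}=x^{n+n(2q-1)}=x^{2nq}=1$, so no parity analysis or case split on $(n,q)$ is needed at all. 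By contrast, your choice of large non-orientable genus $h$ and small $r\in\{n,3n\}$ forces you into the congruence bookkeeping you describe; it works, but it is avoidable.

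On maximality: you overstate the difficulty, and the paper's device handles your signature just as well as its own. Rather than invoking a classification of \emph{NEC} inclusions, the paper passes to the canonical Fuchsian subgroup $\Delta^{+}$, whose signature is $(0;2n,\stackrel{2l}{\ldots},2n)$ with $2l>6$ cone points; this is not on Singerman's list of non-maximal \emph{Fuchsian} signatures, so a generic $\Delta^{+}$ is maximal, whence ${\rm Aut}^{+}(S)=\Delta^{+}/\Gamma=\langle x\rangle$, and since $[{\rm Aut}(S):{\rm Aut}^{+}(S)]\leq 2$ one gets ${\rm Aut}(S)=G_{n}$ immediately. Exactly the same reasoning applies to your $\Lambda^{+}$, which has signature $(h-1;2n,\stackrel{2r}{\ldots},2n)$ with $h-1\geq 2n-1\geq 3$, again far outside Singerman's exceptional list; so the ``technical heart'' you anticipate is in fact a one-line citation. (Incidentally, your claim that $\langle x\rangle$ is the unique index-two subgroup of $G_{n}$ is false for $n$ even --- there are three --- though this does not affect the argument once you use the Fuchsian-subgroup trick.)
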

\section{Proof of Theorem \ref{triangular}}
Assume that $G_{n}$ acts triangular on the closed Riemann surface $S$. Let $\pi:S \to {\mathcal O}=S/\langle x \rangle$ be a branched regular cover map with deck group $\langle x \rangle$. As $\langle x \rangle$ is a normal subgroup and $y^{2} \in \langle x \rangle$, the automorphism $y$ induces a conformal involution $\widehat{y}$ of the orbifold ${\mathcal O}$, so it permutes the branch values of $\pi$ (i.e., the cone points of ${\mathcal O}$) and $S/G_{n}={\mathcal O}/\langle \widehat{y} \rangle$. The triangular property of the action of $G_{n}$ on $S$ (together the Riemann-Hurwitz formula) ensures that ${\mathcal O}$ has genus zero and that its set of cone points are given by a pair of points $p_{1}$, $p_{2}$ (which are permuted by $\widehat{y}$) and probably one or both fixed points of $\widehat{y}$.
By the uniformization theorem, we may identify ${\mathcal O}$ with the Riemann sphere $\widehat{\mathbb C}$. Up to post-composition of $\pi$ with a suitable M\"obius transformation, we may also assume $\widehat{y}(z)=-z$,  $p_{1}=-1$ and $p_{2}=1$.
As the finite groups of M\"obius transformations are either cyclic, dihedral, ${\mathcal A}_{4}$, ${\mathcal A}_{5}$ or ${\mathfrak S}_{4}$, and 
$G_{n}$ is not isomorphic to any of them, the surface $S$ cannot be of genus zero. This (together Riemann-Hurwitz formula) asserts that $\pm 1$ are not the only cone points of ${\mathcal O}$, at least one of the two fixed points of $\widehat{y}$ must also be a cone point (we may assume that $0$ is another cone point). We claim that $\infty$ is also a cone point of ${\mathcal O}$. In fact, if that is not the case, then $\infty$ induces a cone point of order two of $S/G_{n}$ with the property that the $G_{n}$-stabilizer of any point over it on $S$ does not contains a non-trivial power of $x$. But as the only element of $G_{n}$ having order two is $x^{n}$, we obtain a contradiction.
By the above, the cone points of ${\mathcal O}$ are $\pm 1$, $0$ and $\infty$. Moreover, as $0$ and $\infty$ are fixed by the involution $\widehat{y}$, each one of them has points in its preimage on $S$ with $G_{n}$-stabilizer generated by an element of the form $yx^{k}$. In particular, 
they must produce cone points of order $4$ in $S/G_{n}$ and it follows that $\infty$ and $0$ are cone points of order two in ${\mathcal O}$. As $x^{n}$ cannot generate $\langle x \rangle$, the $\langle x \rangle$-stabilizer of any point over $-1$ (and also over $1=\widehat{y}(-1)$) must be stabilized by a non-trivial power $x^{m}$ (where we may assume $m$ to be a divisor of $2n$) such that $\langle x^{m},x^{n}\rangle=\langle x\rangle$. This in particular asserts that $m$ must be relatively prime to $n$, that is, (i) $m=1$ for $n$ even, and (ii) $m \in \{1,2\}$ for $n$ odd. Summarizing the above, we need to consider two situations:
\begin{enumerate}
\item If $n$ is even, then ${\mathcal O}$ has signature $(0;2,2,2n)$ and $S/G_{n}$ has signature $(0;4,4,2n)$. 
\item If $n$ is odd, then either (i) ${\mathcal O}$ has signature $(0;2,2,2n)$ and $S/G_{n}$ has signature $(0,4,4,2n)$ or (ii) ${\mathcal O}$ has signature $(0;2,2,n)$ and $S/G_{n}$ has signature$(0;4,4,n)$.
\end{enumerate}

\subsection{Case $n$ is even}
In this case, $S/G_{n}$ has signature $(0;4,4,2n)$ and (by the Riemann-Hurwitz formula)
$S$ has genus $g=n$. Also, $\pi:S \to \widehat{\mathbb C}$ is a cyclic branched cover, branched at the points $\pm 1$ (with branch order $2n$) and at the points $0, \infty$ (with branch order $2$). In particular, equations for $S$ must be of the form (see, for instance, \cite{BW})
$$S_{n}: v^{2n}=u^{a}(u-1)^{b}(u+1)^{c},$$
where $a,b,c \in \{1,\ldots,2n-1\}$ are such that: (i) $b,c$ are relatively primes to $n$, (ii) $\gcd(a,2n)=n$, (iii) $\gcd(a+b+c,2n)=n$, (iv) $b+c \equiv 0 \mod(2n)$. In this model,  $\pi$ corresponds to $(u,v) \mapsto u$. Condition (ii) asserts that $a=n$ and (iii) follows from this and (iv).
Moreover, we may change the triple $(a,b,c)$ (module $2n$) by $(\alpha a, \alpha b, \alpha c)$, for $\alpha$ integer relatively prime to $2n$, and by permutation of $b$ with $c$. So, by taking $\alpha$ being the inverse of $c$ module $2n$, we may also assume $b=1$ and $c=2n-1$. In this way, we have obtained the uniqueness, up to isomorphisms, of $S$ (this also provides the equations in Remark \ref{otro1}).
By looking at the branched cover $\pi:S \to \widehat{\mathbb C}$, it can be seen that the number of fixed points of $x$ is exactly two and that $y^{2}$ has exactly $2+2n=2+n+n$ fixed points. As the involution $\widehat{y}$ has exactly two fixed points (each one being the projection of the fixed points of $y^{2}$, and the fact that the number of elements in the conjugacy class of $y$ (and of $xy$) is $n$, we may see that $y$ (also $xy$) must have exactly two fixed points. As a consequence (see Section \ref{Gn}), we have obtained that every element of $G_{n}$ acts with fixed points.
Next, let us observe, from the above, that $S/\langle y \rangle$ has signature 
$(\gamma;4,4,2,\stackrel{n}{\ldots},2)$, for some $\gamma \geq 0$. Now, by the Riemann-Hurwitz formula and the fact that $S$ has genus $n$, we obtain that $\gamma=0$. Similarly, $S/\langle xy \rangle$ has genus zero. The orbifold $S/\langle y^{2} \rangle$ must have signature of the form
$(\delta;2,\stackrel{2+2n}{\ldots},2)$. So again from the Riemann-Hurwitz formula we obtain that $\delta=0$. As $S/\langle x \rangle$ has genus zero and $x^{n}=y^{2}$, we may also see that a quotient orbifold $S/\langle x^{a} \rangle$, where $a=1,\ldots,2^{n-1}-1$, must have genus zero and we are done.

\subsection{Case $n$ is odd}
In the case that $m=1$, then we may proceed as similarly as in the previous situation. So, let us now assume $m=2$ (the arguments is also similar, but with some minor changes as we proceed to describe below). In this case, $S/G_{n}$ has signature $(0;4,4,n)$ and (by the Riemann-Hurwitz formula)
$S$ has genus $g=n-1$. Also, $\pi:S \to \widehat{\mathbb C}$ is a cyclic branched cover, branched at the points $\pm 1$ (with branch order $n$) and at the points $0, \infty$ (with branch order $2$). In particular, equations for $S$ must be of the form (see, for instance, \cite{BW})
$$R_{n}: v^{2n}=u^{a}(u-1)^{b}(u+1)^{c},$$
where $a,b,c \in \{1,\ldots,2n-1\}$ are such that: (i) $\gcd(2n,b)=2=\gcd(2n,c)$, (ii) $\gcd(a,2n)=n$, (iii) $\gcd(a+b+c,2n)=n$, (iv) $b+c \equiv 0 \mod(2n)$. In this model,  $\pi$ corresponds to $(u,v) \mapsto u$. Condition (ii) asserts that $a=n$ and (iii) follows from this and (iv).
Moreover, we may change the triple $(a,b,c)$ (module $2n$) by $(\alpha a, \alpha b, \alpha c)$, for $\alpha$ integer relatively prime to $2n$, and by permutation of $b$ with $c$. So, we may also assume $b=2$ and $c=2n-2$.
In this model the generators $x$ and $y$ of $G_{n}$ are given as described in the theorem.
All the above provides the uniqueness, up to isomorphisms (and the equations as in Remark \ref{otro2}).
By looking at the branched cover $\pi:S \to \widehat{\mathbb C}$, it can be seen that the only elements in $\langle x \rangle$ with fixed points are $x^{2}, x^{4}, \ldots, x^{2n-2}$ and $x^{n}=y^{2}$. As the involution $\widehat{y}$ has exactly two fixed points (each one being the projection of the fixed points of $y^{2}$, and the fact that the number of elements in the conjugacy class of $y$ (and of $xy$) is $n$, we may see that $y$ (also $xy$) must have exactly two fixed points. 
Next, let us first observe, from the above, that $S/\langle y \rangle$ has signature 
$(\gamma;4,4,2,\stackrel{n-1}{\ldots},2)$, for some $\gamma \geq 0$. Now, by the Riemann-Hurwitz formula and the fact that $S$ has genus $n-1$, we obtain that $\gamma=0$. Similarly, $S/\langle xy \rangle$ has genus zero. The orbifold $S/\langle y^{2} \rangle$ must have signature of the form
$(\delta;2,\stackrel{2n}{\ldots},2)$. So again from the Riemann-Hurwitz formula we obtain that $\delta=0$. As $S/\langle x \rangle$ has genus zero and $x^{n}=y^{2}$, we may also see that a quotient orbifold $S/\langle x^{a} \rangle$, where $a=1,\ldots,2n-1$, must have genus zero and we are done.

\section{Proof of Theorem \ref{hyper}}\label{hypersymmetric}
 For $n \geq 3$ odd, there is only one index two subgroup of $G_{n}$, this being $H_{1}=\langle x \rangle$.
If $n \geq 2$ is even, then  $G_{n}$ has three index two subgroups, these being 
$H_{1}=\langle x \rangle$, $H_{2}=\langle x^{2},y\rangle$ and  $H_{3}=\langle x^{2},xy\rangle$. In this case, for each $j=1,2,3$, there are closed Riemann surfaces, of genus at least two, for which $G_{n}$ acts as a group of conformal/anticonformal automorphisms with $H_{j}$ acting conformally and the elements of $G_{n}-H_{j}$ acting anticonformally. As, for $n$ even, the automorphism of $G_{n}$
$$\rho:G_{n} \to G_{n}: \;  \rho(x)=x, \; \rho(y)=xy,$$
permutes $H_{2}$ with $H_{3}$, we only need to consider $H_{1}$ and $H_{2}$. 

Let $G_{n}^{+} \in \{H_{1},H_{2}\}$ (in the case $n \geq 3$ odd, $G_{n}^{+}=H_{1}$) and assume that
$G_{n}$ acts as a group of conformal/anticonformal automorphisms of a closed Riemann surface $S,$ of genus at least two, so that $G_{n}^{+}$ acts conformally. As the orders of the cyclic subgroups of $G_{n}$ are divisors of $2n$ or equal to $4$,
the orders of the conical points (if any) of the quotient orbifold ${\mathcal O}=S/G_{n}^{+}$ must have order of such a form. 
The group $G_{n}$ induces an anticonformal involution $\tau$ on the quotient orbifold ${\mathcal O}$  so that ${\mathcal O}/\langle\tau\rangle=S/G_{n}$.

\begin{lemm}
The anticonformal involution $\tau$ acts without fixed points.
\end{lemm}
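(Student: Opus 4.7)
The plan is to argue by contradiction: suppose $\tau$ has a fixed point $\bar{q}\in \mathcal{O}$ and derive that some anticonformal element of $G_n$ must be an involution of $G_n$ lying in $G_n^+$, which is impossible. First, I would lift the fixed point: if $q\in S$ is any preimage of $\bar{q}$, then since $\tau$ is induced by the action of $G_n$ modulo $G_n^+$, there must exist $g\in G_n\setminus G_n^+$ with $g(q)\in G_n^+\cdot q$. Writing $g(q)=h(q)$ with $h\in G_n^+$, the element $\alpha:=h^{-1}g\in G_n\setminus G_n^+$ is anticonformal and fixes $q\in S$.

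The next step is to invoke the standard local linearization: any anticonformal automorphism of finite order that fixes a point of $S$ must be an involution. Indeed, in a local chart near $q$ one can write $\alpha(z)=\overline{f(z)}$ with $f$ holomorphic and $f(0)=0$, so $\alpha^{2}(z)=\overline{f(\overline{f(z)})}$ is conformal, fixes $0$, and has derivative $|f'(0)|^2>0$ there; finite order of $\alpha^2$ forces this derivative to be $1$, and hence $\alpha^{2}=\mathrm{id}$.

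Now I would inspect the involutions of $G_n$ using the presentation \eqref{eqq1}. For any $k$, $(x^{k}y)^{2}=x^{k}(yx^{k}y^{-1})y^{2}=x^{k}x^{-k}x^{n}=x^{n}\neq 1$, so every element of $G_n\setminus\langle x\rangle$ has order exactly $4$. Inside $\langle x\rangle\cong {\mathbb Z}_{2n}$, the unique involution is $x^{n}$. Hence $x^{n}$ is the only involution in $G_{n}$, so the anticonformal involution $\alpha$ found above must equal $x^{n}$.

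Finally I would derive the contradiction by checking that $x^{n}\in G_{n}^{+}$ in both admissible cases: trivially $x^{n}\in H_{1}=\langle x\rangle$; and when $n$ is even, $x^{n}=(x^{2})^{n/2}\in H_{2}=\langle x^{2},y\rangle$. Thus $x^{n}$ acts conformally on $S$, contradicting that $\alpha=x^{n}$ is anticonformal. The only delicate point is the local linearization in step two, but this is the classical argument for anticonformal fixed points and requires no extra hypothesis beyond finite order; everything else is a direct inspection of the dicyclic presentation.
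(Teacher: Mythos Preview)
Your proof is correct and follows the same overall strategy as the paper: lift a fixed point of $\tau$ to produce an anticonformal element $\alpha\in G_n\setminus G_n^{+}$ fixing a point of $S$, argue that $\alpha$ must be an involution, identify it with the unique involution $x^{n}$, and contradict $x^{n}\in G_n^{+}$. The only difference is in how the involution step is justified. The paper uses that the fixed locus of the anticonformal involution $\tau$ on $\mathcal{O}$ is a union of ovals; by pigeonhole some lift in the finite group $G_n\setminus G_n^{+}$ then fixes infinitely many points of $S$, so its square is a conformal automorphism with infinitely many fixed points and hence the identity. You instead work at a single fixed point and linearize: $(\alpha^{2})'(0)=|f'(0)|^{2}$ is a positive real root of unity, hence equals $1$, forcing $\alpha^{2}=\mathrm{id}$. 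Both arguments are standard; yours is a touch more self-contained since it does not invoke the global structure of the fixed set of an antiholomorphic involution, while the paper's version avoids the local computation.
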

\begin{proof}
If  $\tau$ has fixed points, then it must have at least a simple loop (i.e. an oval) consisting of fixed points. This means that we may find a lifting of  $\tau$ in $G_{n}-G_{n}^{+}$ having infinitely many fixed points. The only possibility for such a lifting is to have order two, so it must be $x^{n}$. In the case $G_{n}^{+}=H_{1}$, this is not possible as $x$ is conformal. In the case $n \geq 2$ even and $G_{n}^{+}=H_{2}$, we have that $x^{n}$ is a power of $x^{2}$, which is neither possible as $x^{2}$ is conformal. 
\end{proof}

The previous lemma asserts that the number of conical points (if any) of ${\mathcal O}$ is even, say $2r$, and they are permuted in pairs by the involution $\tau$. This, in particular, asserts that ${\mathcal O}$ has signature of the form $(\gamma;m_{1},m_{1}\ldots,m_{r},m_{r})$, where $m_{j} \geq 2$, and that 
$S/G_{n}={\mathcal O}/\langle \tau \rangle$ is a closed hyperbolic non-orientable surface, say a connected sum of $\gamma+1$ real projective planes and having $r$ cone points, say of orders $m_{1},\ldots,m_{r}$. This means that there is a NEC group
$$\Delta=\langle \alpha_{1},\ldots,\alpha_{\gamma+1},\beta_{1},\ldots,\beta_{r}: \alpha_{1}^{2}\alpha_{2}^{2}\cdots\alpha_{\gamma+1}^{2}\beta_{1}\cdots\beta_{r}=\beta_{1}^{m_{1}}=\cdots=\beta_{r}^{m_{r}}=1\rangle$$
where $\alpha_{j}$ is a glide-reflection and $\beta_{j}$ is an elliptic transformation, and there is a surjective homomorphism
$\Theta:\Delta \to G_{n}$
so that $\Theta(\beta_{j}) \in G_{n}^{+}$ and $\Theta(\alpha_{j}) \in G_{n}-G_{n}^{+}$ with torsion-free kernel $\Gamma=\ker(\Theta)$ so that $S={\mathbb H}^{2}/\Gamma$. The genus of $S$, in this case, is
$$g=1+2n(\gamma+r-1-\sum_{j=1}^{r}1/m_{j}).$$

\subsection{Case $G_{n}^{+}=H_{2}$}
In this case, $n \geq 2$ is even, and 
a minimal genus $g \geq 2$ for $S$ is obtained when $\gamma=0$, $r=2$ and $m_{1}=m_{2}=4$, in which case, $g=n+1$. In this case, 
$\Delta=\langle \alpha_{1},\beta_{1},\beta_{2}: \alpha_{1}^{2}\beta_{1}\beta_{2}=\beta_{1}^{4}=\beta_{2}^{4}=1\rangle$
and the homomorphism $\Theta$ can be assumed to be defines as:  
$\Theta(\alpha_{1})=x$,
$\Theta(\beta_{1})=y$ and $\Theta(\beta_{2})=yx^{n-2}$.

\subsection{Case $G_{n}^{+}=H_{1}$}
Again, to obtain a minimal genus $g \geq 2$, we need to assume  
$\gamma=0$, $r=2$, the values $m_{1},m_{2} \in \{2,\ldots,2n\}$ are divisors of $2n$, with $1/m_{1}+1/m_{2}$ maximal such that  
there is a surjective homomorphism, with torsion-free kernel contained in $\Delta^{+}$ (the index two orientation-preserving half of $\Delta$),
$$\Theta:\Delta=\langle \alpha_{1},\beta_{1},\beta_{2}: \alpha_{1}^{2}\beta_{1}\beta_{2}=\beta_{1}^{m_{1}}=\beta_{2}^{m_{2}}=1\rangle \to G_{n},$$
such that $\Theta(\Delta^{+})=H_{1}$. Up to post-composition by an automorphism of $G_{n}$, it can be assumed to satisfy that
$\Theta(\alpha_{1})=y$, and for $j=1,2$, that $\Theta(\beta_{j})=x^{2a_{j}n/m_{1}}$, where $a_{j} \in \{1,\ldots, m_{j}-1\}$,  $\gcd(a_{j},m_{j})=1$,
$\gcd(2n/m_{1},2n/m_{2})=1$ (as $\langle \Theta(\beta_{1}), \Theta(\beta_{2})\rangle=\langle x\rangle$), 
and (as $y^{2}=x^{n}$) also that $2a_{1}n/m_{1}+2a_{2}n/m_{2} \equiv 0 \mod(n)$.
We may write $m_{1}=AB$ and $m_{2}=AC$, where $\gcd(B,C)=1$. As, in this case, $2n=ABL_{1}=ACL_{2}$, it follows that $L_{1}=CL$ and $L_{2}=BL$; so $2n=ABCL$. Now, $1=\gcd(2n/m_{1},2n/m_{2})=\gcd(CL,BL)=L$, so $2n=ABC$, $m_{1}=AB$ and $m_{2}=AC$. As $\gcd(a_{1},AB)=1=\gcd(a_{2},AC)$ and $Ca_{1}+Ba_{2}=2a_{1}n/m_{1}+2a_{2}n/m_{2} \equiv 0 \mod(n)$, we have the following possibilities:
(1) $A=2\widehat{A}$ and $Ca_{1}+Ba_{2}=r \widehat{A}BC$, so $B=C=1$ and $m_{1}=m_{2}=2n$, (2) $B=2 \widehat{B}$ and $Ca_{1}+2\widehat{B}a_{2}=rA\widehat{B}C$, so $C=1$, $B=2$, $m_{1}=2n$ and $m_{2}=n$, and (3) $C=2 \widehat{C}$ and $2 \widehat{C}a_{1}+Ba_{2}=rAB\widehat{C}$, so $B=1$, $C=2$, $m_{1}=n$ and $m_{2}=2n$. If $n \geq 4$ is even, then cases (2) and (3) are not possible, so the maximal value of $1/m_{1}+1/m_{2}$ happens for $m_{1}=m_{2}=2n$, from which we obtain $g=2n-1 \geq n+1$. If $n \geq 5$ is odd, then the maximal value of $1/m_{1}+1/m_{2}$ happens for (up to permutation of indices) $m_{1}=n$ and $m_{2}=2n$, from which we obtain $g=2n-2$.

\section{Proof of Theorem \ref{pseudo}}
Let $l=n (2q-1)$, where $q \geq 2$. Let us consider the NEC group
$\Delta=\langle \alpha, \beta_{1},\ldots,\beta_{l}: \alpha^{2}\beta_{1}\cdots \beta_{l}=\beta_{1}^{2n}=\cdots=\beta_{l}^{2n}=1\rangle.$
The quotient Klein surface uniformized by $\Delta$ is the orbifold whose underlying surface is the real projective plane and its conical points are $l$ points, each one with order $2n$. Let us consider the surjective homomorphism
$\Theta:\Delta \to Q_{2^{n}}: 
\Theta(\alpha)=y, \; \Theta(\beta_{j})=x, j=1,\ldots,l.$
If $\Gamma$ is the kernel of $\Theta$, then $\Gamma$ is a torsion-free subgroup contained in the half-orientation part $\Delta^{+}$ of $\Delta$. The closed Riemann surface $S={\mathbb H}^{2}/\Gamma$ is a closed Riemann surface admitting the group $G_{n}$ as a subgroup of ${\rm Aut}(S)$ with $\langle x \rangle$ a subgroup of ${\rm Aut}^{+}(S)$. As the signature of $S/\langle x \rangle$ is of the form $(0;2n,\ldots,2n)$, where the number of cone points is exactly $2l=2n(2q-1) \geq 4(2q-1)>6$, it follows from Singerman list of maximal Fuchsian groups \cite{Singerman} that we may choose $\Delta$ so ${\rm Aut}(S)=G_{n}$. In this case, as the only anticonformal automorphisms of $S$ are the elements of $G_{n}-\langle x \rangle$ (which have order four as seen in Section \ref{Gn}) it follows that $S$ is pseudo-real. By Riemann-Hurwitz formula, applied to the branched cover $S \to S/\langle x\rangle$, we observe that $S$ has genus $(l-1)(2n-1)$.



\begin{thebibliography}{99}

\bibitem{GH}
C. Bagi\'nski, G. Gromadzki and R. A. Hidalgo.
On purely non-free finite actions of abelian groups on compact surfaces.
{\it Archiv der Mathematik} {\bf 109} (2017), 311--321.

\bibitem{Belyi}
G. V. Belyi.
On Galois extensions of a maximal cyclotomic field.
{\it Mathematics of the USSR-Izvestiya} {\bf 14} No.2 (1980), 247--256.

\bibitem{BCG}
I. Bauer, F. Catanese and F. Grunewald. 
Faithful actions of the absolute Galois group on connected components of moduli spaces.
{\it Inventiones Mathematicae} {\bf 199} No. 3 (2015) 859--888.

\bibitem{BBCGG}
S. A. Broughton, E. Bujalance, A. F. Costa, J. M. Gamboa and G. Gromadzki.
Symmetries of Accola-Maclachlan and Kulkarni surfaces.
{\it Proc. of the Amer. Math. Soc.} {\bf 127} No. 3 (1999), 637--646.

\bibitem{BW}
S. A. Broughton and A. Wootton
Cyclic $n$-gonal Surfaces.
https://arxiv.org/pdf/1003.3262.pdf


\bibitem{BCC}
E. Bujalance, M. D. E. Conder and A. F. Costa. 
Pseudo-real Riemann surfaces and chiral regular maps. 
{\it Trans. Amer. Math. Soc.} {\bf 362} No. 7 (2010), 3365--3376.


\bibitem{Burn}
W. Burnside.
{\it Theory of Groups of Finite Order}. Cambridge University Press, 1911.

\bibitem{CR}
A. Carocca and R. E. Rodr'guez.
Jacobians with group actions and rational idempotents.
{\it J.Algebra} {\bf 306} No. 2 (2006), 322--343.


\bibitem{gilman} 
J. Gilman.
Computing adapted bases for conformal automorphism groups of Riemann surfaces.
{\it Contemp. Math.} {\bf 629} (2014), 137--153.

\bibitem{GiGo}
E. Girondo and G. Gonz\'alez-Diez.
{\it Introduction to compact Riemann surfaces and dessins d'enfants}. 
London Mathematical Society Student Texts {\bf 79}. Cambridge University Press, Cambridge, 2012.


\bibitem{GiGo1}
E. Girondo and G. Gonz\'alez-Diez.
A note on the action of the absolute Galois group on dessins.
{\it Bull. London Math. Soc.} {\bf 39} No. 5 (2007), 721--723.

\bibitem{GoJa} 
G. Gonz\'alez-Diez and A. Jaikin-Zapirain. 
The absolute Galois group acts faithfully on regular dessins and on Beauville surfaces. {\it  Proc. London Math. Soc.} {\bf 111} No. 4 (2015), 775-796.

\bibitem{Gro}
A. Grothendieck.
Esquisse d'un Programme (1984). In {\it Geometric Galois Actions}. 
L. Schneps and P. Lochak eds., {\it London Math. Soc. Lect. Notes Ser.} {\bf 242}.
Cambridge University Press, Cambridge, 1997, 5--47.


\bibitem{Hurwitz}
A. Hurwitz. 
\"Uber algebraische gebilde mit eindeutigen transformationen in siche. 
{\it Math. Ann.} {\bf 41} (1893), 403--442.

\bibitem{JW}
G. A. Jones and J. Wolfart.
{\it Dessins d'Enfants on Riemann Surfaces}
Springer Monographs in Mathematics. 2016.

\bibitem{LR}
H. Lange and S. Recillas.
Abelian varieties with group actions. 
{\it J. Reine Angew. Mathematik} {\bf  575} (2004), 135--155.


\bibitem{Maclachlan}
C. Maclachlan.
A bound for the number of automorphisms of a compact Riemann surface. 
{\it J. London Math. Soc.} {\bf 44} (1969), 265--272.


\bibitem{MZ7}
C. L. May and J. Zimmerman.
Groups of small strong symmetric genus.
{\it J. Group Theory} {\bf 3} (2000), 233--245.


\bibitem{Rohde}
J.C. Rohde. 
{\it Cyclic coverings, Calabi-Yau manifolds and complex multiplication}. 
Lecture Notes in Mathematics, vol. {\bf 1975}, Springer-Verlag, Berlin, 2009

\bibitem{Singerman}
D. Singerman.
Finitely maximal Fuchsian groups.
{\it J. London Math. Soc.} (2) {\bf 6} (1972), 29--38.

\bibitem{Schneps}
L. Schneps. 
Dessins d?enfant on the Riemann sphere. 
In {\it The Grothendieck theory of dessins d?enfants}. Edited by Leila Schneps. 
{\it London Math. Soc. Lect. Notes Ser.} {\bf 200}. Cambridge University Press, Cambridge, 1994, 47--77.


\bibitem{T}
T.W. Tucker. 
Finite groups acting on surfaces and the genus of a group. 
{\it J. Combin. Theory Ser. B} {\bf 34} (1983), 82--98.



\end{thebibliography}
\end{document}